\documentclass[12pt]{amsart}

\usepackage{amscd,amssymb,times, amsmath,color, fullpage}
\usepackage{enumitem}
\newtheorem{theorem}{Theorem}

\newtheorem{proposition}[theorem]{Proposition}

\theoremstyle{definition}
\newtheorem{definition}[theorem]{Definition}

\newtheorem{example}[theorem]{Example}

\theoremstyle{remark}

\begin{document}

\title{On characteristic foliations of metric contact-symplectic structures}
\author{Amine Hadjar}
\address{%
D\'epartement de Math\'ematiques, IRIMAS, Universit{\'e} de Haute Alsace
\newline
18, Rue des Fr{\`e}res Lumi{\`e}re, 68093 Mulhouse Cedex, France}
\email{mohamed.hadjar@uha.fr}

\date{\today; 2020 Mathematics Subject Classification: primary 53C25; secondary 53B20, 53D10, 53B35, 53C12}
\keywords{Metric contact-symplectic structure, minimal submanifold, almost contact metric manifold, foliation.}

\maketitle
\vspace{5mm}

\begin{abstract}
We study compatible and associated metrics for a contact-symplectic pair $(\eta , \omega)$ on a manifold.
We show that the integral curves of the Reeb vector field are geodesics for any compatible metric. 
We prove that all associated metrics share a common volume element, which we give explicitly.
When the characteristic foliations of $\eta$ and $\omega$ are orthogonal   with respect to an associated metric, their leaves, as well as those of the characteristic foliation of $d\eta$, are minimal. 
We construct explicit examples on nilpotent Lie groups and nilmanifolds where the characteristic foliations are not both totally geodesic.
\end{abstract}

\section{Introduction} 
Bande \cite{B}  introduced the notion of \emph{contact-symplectic pairs},
which he later studied together with Ghiggini and Kotschick \cite{  BGK, BKsp}.
Subsequently, he and the present author  adapted associated metrics to this structure in order to construct contact pair structures on Boothby--Wang fibrations \cite{BHnormal}.

\medskip 
\noindent
A contact-symplectic pair on a manifold $M$ consists of a $1$-form $\eta$ and a $2$-form $\omega$ such that the triple $(M,\eta,\omega)$ is locally the product of a contact manifold and a symplectic manifold.
These manifolds are naturally foliated by the characteristic foliation $\mathcal{S}$ of $\eta$ and the characteristic foliation $\mathcal{C}$ of $\omega$, which are transverse and complementary.
The form $\eta$ induces contact forms on the leaves of $\mathcal{C}$, while $\omega$ induces symplectic forms on the leaves of $\mathcal{S}$.
The Reeb vector fields on the contact leaves together define a global vector field $\xi$, called the Reeb vector field of the contact-symplectic pair.

\medskip 
\noindent
Compatible and associated metrics are defined analogously to those in contact geometry, beginning with a $(1,1)$-tensor field $\phi$ 
that vanishes on $\xi$ and  restricts to an almost complex structure on the kernel of $\eta$.
Thus, the triple $(\eta,  \xi, \phi)$ determines an almost contact structure, with  $\omega$ omitted. 
For an associated metric $g$, the triple $(\eta,\omega,g)$ is called a \emph{metric contact-symplectic structure} on $M$.

\medskip 
\noindent
In this paper, we prove that the integral curves of the Reeb vector field of a contact-symplectic pair are geodesics with respect to any compatible metric.
We also show that all associated metrics share the same volume element, and we provide an explicit expression for it in terms of the pair $(\eta,\omega)$.
For associated metrics for which  the foliations $\mathcal{C}$ and $\mathcal{S}$ are orthogonal, we prove that all their leaves are minimal. 
In addition, we show that the leaves of the characteristic foliation of $d\eta$ are also minimal. 

\medskip 
\noindent
We provide examples of nilpotent Lie groups and nilmanifolds endowed with  metric contact-symplectic structures whose characteristic foliations are orthogonal but not both totally geodesic.

\medskip 
\noindent
All differential objects considered are assumed to be smooth.

\section{Preliminaries}

A pair $(\eta , \omega)$ consisting of a $1$-form $\eta$ and a closed $2$-form $\omega$ on a manifold $M$ is called a \emph{contact-symplectic pair of type $(m,n)$} if the following conditions are satisfied:
\begin{align*}
&\eta\wedge (d\eta)^{m}\wedge\omega^{n} \quad \text{is a volume form},\\
&(d\eta)^{m+1}=0 \quad  \text{and} \quad \omega^{n+1}=0.
\end{align*}
Such a manifold is necessarily orientable and of odd dimension $2n+2m+1$.
Since the cases $n=0$ or $m=0$ correspond to classical contact and cosymplectic manifolds, respectively, we will henceforth assume $m\geq 1$ and $n\geq 1$.

\medskip 
\noindent
The forms $\eta$ and  $\omega$ have constant \'Elie Cartan classes, 
equal to $2m+1$ and $2n$, respectively.
As a consequence, the distributions
\begin{align*}
&T\mathcal{S}=\{X\in TM \mid \eta(X)=0, d\eta(X,Y)=0 \;  \forall Y \in TM  \} \\
&T\mathcal{C}=\{X \in TM  \mid \omega(X,Y)=0 \;  \forall Y \in TM  \}
\end{align*}
are integrable and determine two transverse and complementary foliations on $M$, denoted by $\mathcal{S}$ and $\mathcal{C}$, respectively.
The leaves of $\mathcal{S}$ are symplectic manifolds of dimension $2n$, with symplectic form induced by $\omega$, while the leaves of $\mathcal{C}$ are contact manifolds of dimension $2m+1$, with contact form induced by $\eta$ (see \cite{B}).

\medskip 
\noindent
The vector field $\xi$ defined uniquely by the equations
\begin{equation*}
\eta(\xi) = 1, \quad
i_\xi d\eta = 0, \quad
i_\xi \omega = 0
\end{equation*}
is called the \emph{Reeb vector field} of the contact-symplectic pair.
It is tangent to the foliation $\mathcal{C}$ and restricts to the classical Reeb vector field on each contact leaf (see \cite{B}).
Its flow preserves both forms $\eta$ and $\omega$:
\begin{equation*}
\mathcal{L}_\xi \eta = 0, \quad \mathcal{L}_\xi \omega = 0,
\end{equation*}
where $\mathcal{L}_X$ denotes the Lie derivative along a vector field $X$.

\medskip 
\noindent
The characteristic subbundle of  $d\eta$ is $\mathbb{R}\xi \oplus T\mathcal{S}$, which is integrable. The corresponding foliation will be denoted by $\mathcal{K}$.
Each of its leaves is a $(2n+1)$-dimensional cosymplectic manifold foliated by leaves of $\mathcal{S}$ (see \cite{B}).

\medskip 
\noindent
We also define the  \emph{horizontal subbundle} $ \mathcal{H}$, of dimension $2n+2m$, to be the kernel of $\eta$, 
and the  subbundle $\mathcal{D}=\mathcal{H}\cap T\mathcal{C}$, of dimension $2m$, which corresponds to the contact distribution along the leaves of $\mathcal{C}$.
The tangent bundle of $M$ thus splits as:
\begin{equation*}
TM=T\mathcal{S}\oplus T\mathcal{C} = T\mathcal{S}\oplus \mathbb{R}\xi  \oplus \mathcal{D} = T\mathcal{K} \oplus  \mathcal{D}=  \mathcal{H}\oplus \mathbb{R}\xi .
\end{equation*}

\noindent
A basic example is given by the product of a $(2m+1)$-dimensional contact manifold with a $2n$-dimensional symplectic manifold.
In fact, any contact-symplectic pair of type $(m,n)$ is locally isomorphic to such a product. 

\medskip 
\noindent
 An \emph{almost contact-symplectic structure} on a manifold $M$ is a triple $(\eta, \omega, \phi)$ where $(\eta, \omega)$ is a contact-symplectic pair with Reeb vector field $\xi$, and $\phi$ is a $(1,1)$-tensor field satisfying
\begin{equation*}
 \phi^2=-I +\eta \otimes \xi.
\end{equation*}
This condition ensures that the triple $(  \phi , \xi,   \eta)$ defines an almost contact structure on $M$, and in particular: 
\begin{equation*}
\phi \xi=0 , \quad \eta \circ \phi =0
\end{equation*}
and $\phi$ induces an almost complex structure on the horizontal subbundle $\mathcal{H}$.
 
\medskip 
\noindent
On manifolds endowed with almost contact-symplectic structures, one can define Riemannian metrics adapted to the structure, as follows:

\begin{definition}
Let $(\eta, \omega, \phi)$ be an almost  contact-symplectic structure on a manifold $M$ with Reeb vector field $\xi$.
A Riemannian metric $g$ on $M$ is said to be:
\begin{enumerate}[label=(\roman*)]
\item \emph{compatible}  if, for all vector fields $X$, $Y$,
	\begin{equation*}
	g(\phi X,\phi Y)=g(X,Y)-\eta (X)\eta (Y)
	\end{equation*}
\item \emph{associated}  if, for all vector fields $X$, $Y$,
	 \begin{align*}
	&g(X,\phi Y)=(\mathrm{d}\eta +\omega)(X,Y) \\
	&g(X,\xi)=\eta (X)
	\end{align*} 
\end{enumerate}
\end{definition}

 \noindent
 A metric $g$ is compatible with the almost contact-symplectic structure $(\eta, \omega, \phi)$ if and only if the quadruple $( \phi, \xi, \eta, g)$ is an almost contact metric structure (see \cite{Blairbook} for a definition).
So an almost c{proposition}ontact-symplectic pair structure $(\eta, \omega, \phi)$ always admits a compatible metric. 
In particular, for any compatible metric $g$, we have 
 \begin{equation*}
g(\xi,\xi)=1\quad  \text{and} \quad g(X,\xi)=\eta(X),
\end{equation*}
so that the subbundles $\mathcal{H}$ and $\mathbb{R}\xi$ are orthogonal, $\xi$ being the Reeb vector field of the contact-symplectic pair.

 \begin{proposition}
For an almost contact-symplectic structure, every associated metric is compatible.
However, the converse does not hold in general.
 \end{proposition}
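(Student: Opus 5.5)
To prove that an associated metric $g$ is compatible, we show $g(\phi X,\phi Y)=g(X,Y)-\eta(X)\eta(Y)$ by feeding $\phi Y$ into the defining identity $g(X,\phi Y)=\Phi(X,Y)$, where $\Phi=\mathrm{d}\eta+\omega$. Applying it with $X$ replaced by $\phi X$ and $Y$ by $\phi Y$, and using the skew-symmetry of $\Phi$, we get
\begin{equation*}
g(\phi X,\phi Y)=-g(Y,\phi^2X)\cdot(-1)\cdot(-1)
\end{equation*}
up to sign bookkeeping. More cleanly, $g(\phi X,\phi Y)=\Phi(\phi X,Y)=-\Phi(Y,\phi X)=-g(Y,\phi^2X)$. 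Substituting $\phi^2=-I+\eta\otimes\xi$ gives $g(Y,X)-\eta(X)g(Y,\xi)$. The second associated condition, $g(Y,\xi)=\eta(Y)$, then yields exactly $g(X,Y)-\eta(X)\eta(Y)$.

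The only point needing care is the first equality, $g(\phi X,\phi Y)=\Phi(\phi X,Y)$. This is the defining identity applied to the pair $(\phi X,Y)$, so it requires no extra hypothesis. The whole computation is purely algebraic and pointwise.

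For the failure of the converse, we exhibit a compatible metric that is not associated. Starting from any associated (hence compatible) metric $g$, or more simply from an explicit model, we rescale it on $\mathcal H$. Set $g'=\lambda\, g|_{\mathcal H}+\eta\otimes\eta$ with a constant $\lambda>0$, $\lambda\neq1$, keeping $\mathcal H\perp\xi$. Since $\phi$ preserves $\mathcal H$ and kills $\xi$, the compatibility identity is homogeneous on $\mathcal H$, so $g'$ is still compatible. However, $g'(X,\phi Y)=\lambda\Phi(X,Y)\neq\Phi(X,Y)$ for horizontal $X,Y$ with $\Phi(X,Y)\ne0$, which exist because $\Phi$ is nondegenerate on $\mathcal H$ by the volume condition. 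Hence $g'$ is not associated.

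To make this concrete and avoid relying on the existence of associated metrics, which has not been established at this point in the paper, we work on $\mathbb R^{2m+1}\times\mathbb R^{2n}$. There we take $\eta=dz-\sum y_i\,dx_i$ and $\omega=\sum du_j\wedge dv_j$, with the standard $\phi$ and the standard associated metric, and then apply the rescaling. The main care needed is checking signs and the normalization convention for $\mathrm{d}\eta$, so that the standard metric really is associated; the rescaling argument itself is insensitive to these choices.
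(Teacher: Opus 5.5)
Your compatibility argument is the paper's computation, $g(\phi X,\phi Y)=\Phi(\phi X,Y)=-\Phi(Y,\phi X)=-g(Y,\phi^2X)=g(X,Y)-\eta(X)g(Y,\xi)$. You also rightly make explicit the final use of $g(Y,\xi)=\eta(Y)$, which the paper leaves implicit. Delete the muddled opening (``replacing $X$ by $\phi X$ and $Y$ by $\phi Y$'', ``up to sign bookkeeping''): only the defining identity for the pair $(\phi X,Y)$ is used.

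For the failure of the converse you take a genuinely different route. The paper flips the sign of $\omega$. The structures $(\eta,\omega,\phi)$ and $(\eta,-\omega,\phi)$ share the almost contact structure $(\phi,\xi,\eta)$, hence have the same compatible metrics, while a metric associated to both would force $\omega=0$. That argument needs only the existence of compatible metrics, but its conclusion is non-constructive: the failure holds ``for at least one of the two structures''.

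Your rescaling $g'=\lambda(g-\eta\otimes\eta)+\eta\otimes\eta$ is correct. It is positive definite and compatible, and it satisfies $g'(X,\phi Y)=\lambda\Phi(X,Y)$ with $\Phi\neq 0$. It gives an explicit one-parameter family of counterexamples for a fixed structure, but it needs an associated metric to start from. Your flat model supplies one, though you leave the check undone. With the normalization of the paper's examples, one can take $g=\eta^2+\tfrac12\sum_i(dx_i^2+dy_i^2)+\tfrac12\sum_j(du_j^2+dv_j^2)$, with $\phi\partial_z=0$, $\phi\partial_{y_i}=\partial_{x_i}+y_i\partial_z$ and $\phi\partial_{v_j}=\partial_{u_j}$.

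You can bypass the model entirely. Take any compatible metric $g$ of a given almost contact-symplectic structure. If $g$ is not associated you are done; if it is, rescale it. This shows that every almost contact-symplectic structure carries compatible metrics that are not associated, which is a bit more than either argument as written delivers.
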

 \begin{proof}
Let $g$ be an associated metric. Then:
 \begin{align*}
g(\phi X,\phi Y)&=(\mathrm{d}\eta +\omega)(\phi X,Y) \\
			&=-(\mathrm{d}\eta +\omega)(Y, \phi X) \\
			&=-g(Y,\phi^2X) \\
			&=-g(Y,-X+\eta(X)\xi)\\
			&=g(X,Y)-\eta(X)\eta(Y),
\end{align*}
which shows that $g$ is compatible.

\medskip 
\noindent
To prove that the converse fails in general, suppose that  $(\eta, \omega, \phi)$  is an almost contact-symplectic structure.
Then $(\eta, - \omega, \phi)$ also defines such a structure.
Take a metric $g$ compatible with the common almost contact structure $( \phi, \xi, \eta)$. It cannot be simultaneously associated to both, since this would imply
 \begin{equation*}
(\mathrm{d}\eta+\omega)(X,Y)=g(X,\phi Y)=(\mathrm{d}\eta-\omega)(X,Y), 
\end{equation*}
which leads to 
 \begin{equation*}
\omega(X,Y) = 0
 \end{equation*}
 for all $X, Y$, contradicting the fact that $\omega$ is symplectic on $T\mathcal{S}$.
 \end{proof}

\noindent
A \emph{metric contact-symplectic structure} is a 4-tuple $(\eta, \omega, \phi, g)$ where the triple $(\eta, \omega, \phi)$ is an almost contact-symplectic structure and $g$ a Riemannian metric associated to it.
Note that  the required equations use the objects $\xi$ and $\phi$ that are uniquely defined by $(\eta, \omega, g)$.
Hence, the definition can equivalently be reformulated as follows:

\begin{definition}
A \emph{metric contact-symplectic structure}  on a manifold is a triple $(\eta, \omega, g)$ where $(\eta, \omega)$ is a contact-symplectic pair with Reeb vector field $\xi$, and $g$ a Riemannian metric such that:
\begin{enumerate}[label=(\roman*)]
\item $g(X,\xi)=\eta (X)$, and
\item the endomorphism field $\phi$ defined by $g(X,\phi Y)=(\mathrm{d}\eta +\omega)(X,Y)$ satisfies $\phi^2=-I +\eta \otimes \xi$.
\end{enumerate}
\end{definition}

\section{Examples}
\noindent  
We begin with the following straightforward construction.

\begin{example}
The product of a symplectic manifold endowed with an associated metric and a contact metric manifold (see \cite{Blairbook} for definitions) yields a manifold with a metric contact-symplectic structure. 
In this case, the two characteristic foliations are orthogonal and, moreover, both totally geodesic.
\end{example}

\noindent
It is important to note that although any contact-symplectic manifold is locally a product of a contact manifold and a symplectic manifold, there exist metric contact-symplectic manifolds with orthogonal characteristic foliations that are not locally such products.
The following examples illustrate this well.

\begin{example}
Let $G^5$ be the simply connected $5$-dimensional nilpotent Lie group defined by the structure equations
 \begin{equation*}
 \mathrm{d}\alpha_2=\alpha_1\wedge \alpha_3, \quad  \mathrm{d}\alpha_4=\alpha_1\wedge \alpha_5, \quad
 \mathrm{d}\alpha_1= \mathrm{d}\alpha_3= \mathrm{d}\alpha_5=0.
  \end{equation*}
The pair  $(\eta, \omega)$, where $\eta=\alpha_2$ and $\omega=\alpha_4\wedge \alpha_5$, is a contact-symplectic pair of type $(1,1)$ on $G^5$.  
The Reeb vector field is $\xi=X_2$, where the $X_i$'s are dual to the $\alpha_i$'s.
The characteristic distribution $T\mathcal{S}$ of $\eta$ is spanned by $X_4$ and $X_5$, while the characteristic distribution $T\mathcal{C}$  of $\omega$ is spanned by   $X_1$, $X_2$ and $X_3$.
Consider the metric
\begin{equation*}
g=\alpha_2^2+\frac{1}{2}\left( \alpha_1^2+\alpha_3^2+\alpha_4^2+\alpha_5^2 \right)
 \end{equation*}
which becomes an associated metric for the contact-symplectic pair when equipped with the endomorphism field $\phi$ defined by
\begin{equation*}
\phi X_2=0 ,\quad  \phi X_ 3=X_1 ,\quad \phi X_1 =-X_3 ,\quad \phi X_ 5=X_4,\quad \phi X_4 =-X_5 .
 \end{equation*}
With respect to this metric, the two characteristic distributions $T\mathcal{S}$ and $T\mathcal{C}$ are orthogonal.
The foliation $\mathcal{C}$ is easily seen to be totally geodesic.
However, denoting by $\nabla$ the Levi-Civita connection, we have:
\begin{align*}
 	  -2 g(\nabla_{X_4}X_5, X_1 )&=  g([X_5,X_1],X_4) + g([X_4,X_1],X_5)  +g([X_5,X_4],X_1)  \\
	                                              &= g(X_4,X_4)  \\
	                                              &\neq 0
  \end{align*}
This shows that the foliation $\mathcal{S}$ is not totally geodesic.

\medskip 
\noindent
Since the structure constants of the Lie algebra of $G^5$ are rational, the group admits cocompact lattices $\Gamma$. The left-invariant metric contact-symplectic structure $(\eta, \omega, g)$ then descends to any compact nilmanifold $G^5/\Gamma$, yielding a metric contact-symplectic structure of type $(1,1)$ with orthogonal characteristic foliations, where only one of the two is totally geodesic.

\end{example}

\begin{example}
Let $G^7$ be the simply connected $7$-dimensional nilpotent Lie group defined by the structure equations:
 \begin{equation*}
 \mathrm{d}\alpha_{j-1}=\alpha_1\wedge \alpha_j \quad  \text{for} \quad  j=3,4,5,7\quad \text{and}\quad
 \mathrm{d}\alpha_1= \mathrm{d}\alpha_5= \mathrm{d}\alpha_7=0
  \end{equation*}
Set $\eta=\alpha_2$ and $\omega= \alpha_4\wedge \alpha_5+\alpha_6\wedge \alpha_7$. 
Then  $(\eta, \omega)$  is a contact-symplectic pair of type $(1,2)$ on  $G^7$,  with Reeb vector field $\xi=X_2$,  where the $X_i$'s are  dual to the $\alpha_i$'s.
The characteristic distribution $T\mathcal{S}$ of $\eta$  is spanned by $X_4$, $X_5$, $X_6$ and $X_7$, while  the characteristic distribution  $T\mathcal{C}$ of  $\omega$ is spanned by $X_1$, $X_2$ and $X_3$.
Consider the metric
 \begin{equation*}
g=\alpha_2^2+\frac{1}{2}\left( \alpha_1^2+\alpha_3^2+\alpha_4^2+\alpha_5^2 +\alpha_6^2+\alpha_7^2\right)
 \end{equation*}
which is associated to the contact-symplectic pair via the endomorphism field $\phi$ defined to be zero on $X_2$ and:
\begin{align*}
\phi X_ 3&=X_1 ,\quad \phi X_1 =-X_3 , \quad \phi X_ 5=X_4,\\
\phi X_4 &=-X_5 , \quad \phi X_ 7=X_6,\quad \phi X_6 =-X_7.
  \end{align*}
 The characteristic distributions $T\mathcal{S}$ and $T\mathcal{C}$ are again orthogonal with respect to this metric.
We now show that neither foliation is totally geodesic.
Letting $\nabla$ denote the Levi-Civita connection, we compute:
\begin{align*}
 	  -2 g(\nabla_{X_3}X_1, X_4 )&=   g([X_1,X_4],X_3) + g([X_3,X_4],X_1)+g([X_1,X_3],X_4)   \\
	                                              &= -g(X_3,X_3) \\
	                                              &\neq 0
  \end{align*}
  and
  \begin{align*}
 	  -2 g(\nabla_{X_4}X_5, X_1)&=   g([X_5,X_1],X_4)  +g([X_4,X_1],X_5) +  g([X_5,X_4],X_1)   \\
	                                              &= g(X_4,X_4)  \\
	                                              &\neq 0,                       
  \end{align*}
thus proving that both foliations $\mathcal{C}$ and $\mathcal{S}$ fail to be totally geodesic.

\medskip 
\noindent
As in the previous case, the rationality of the structure constants guarantees the existence of cocompact lattices $\Gamma$ in $G^7$. The left-invariant metric contact-symplectic structure $(\eta, \omega, g)$ then descends to each compact nilmanifold $G^7/\Gamma$, yielding a metric contact-symplectic structure of type $(1,2)$ with orthogonal characteristic foliations, neither of which is totally geodesic.

\end{example}

\section{Minimal Foliations}
\noindent 
We first turn our attention to the integral curves of the Reeb vector field of an almost contact-symplectic structure endowed with a compatible Riemannian metric.

\begin{theorem}
Let $(\eta, \omega, \phi, g)$ be an almost contact-symplectic structure on a manifold, with a compatible Riemannian metric $g$.
Then  the integral curves of its Reeb vector field $\xi$ are geodesics.
\end{theorem}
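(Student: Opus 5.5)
Since $\xi$ has unit length for any compatible metric, $g(\nabla_X\xi,\xi)=\frac12 X g(\xi,\xi)=0$ for every $X$. It therefore suffices to show $g(\nabla_\xi\xi,X)=0$ for every vector field $X$ with $\eta(X)=0$, i.e.\ for $X$ a section of $\mathcal{H}$. Then $\nabla_\xi\xi$ has no component along $\mathcal{H}$ or along $\mathbb{R}\xi$, so $\nabla_\xi\xi=0$ and the integral curves of $\xi$ are geodesics.

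To compute $g(\nabla_\xi\xi,X)$ we use the Koszul formula with $X\in\Gamma(\mathcal{H})$:
\begin{equation*}
2g(\nabla_\xi\xi,X)=2\xi\, g(\xi,X)-X g(\xi,\xi)+2g([X,\xi],\xi).
\end{equation*}
The first term vanishes because $g(\xi,X)=\eta(X)=0$. The second vanishes because $g(\xi,\xi)=1$. Using $g(Y,\xi)=\eta(Y)$ from compatibility, the last term equals $2\eta([X,\xi])$.

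It remains to evaluate $\eta([X,\xi])$. From
\begin{equation*}
d\eta(X,\xi)=X\eta(\xi)-\xi\eta(X)-\eta([X,\xi])
\end{equation*}
together with $\eta(\xi)=1$ and $\eta(X)=0$, we get $\eta([X,\xi])=-d\eta(X,\xi)$. The defining condition $i_\xi d\eta=0$ of the Reeb vector field then gives $d\eta(X,\xi)=0$, so $\eta([X,\xi])=0$. Equivalently, $\mathcal{L}_\xi\eta=0$ yields $\eta([\xi,X])=\xi\eta(X)=0$. Hence $g(\nabla_\xi\xi,X)=0$, which completes the argument.

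This argument uses only compatibility and the Reeb condition $i_\xi d\eta=0$; neither $\omega$ nor $\phi$ enters beyond guaranteeing $g(\cdot,\xi)=\eta$. The only point needing care is the bookkeeping of sign conventions for $d\eta$, but the conclusion is insensitive to them, since the relevant term is zero under either convention.
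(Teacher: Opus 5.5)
Your proof is correct and is essentially the paper's argument. Your Koszul computation produces the same identity $g(\nabla_\xi\xi,X)=(\mathcal{L}_\xi\eta)(X)-\tfrac12 X g(\xi,\xi)$ that the paper gets by expanding $(\mathcal{L}_\xi\eta)(X)$ using torsion-freeness and metric compatibility, and both proofs then conclude from $\mathcal{L}_\xi\eta=0$ (equivalently $i_\xi d\eta=0$ with $\eta(\xi)=1$) and $g(\xi,\xi)=1$, the only cosmetic difference being that you split off the $\xi$-component instead of treating all $X$ at once.
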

\begin{proof}
Let $\nabla$ denote the Levi-Civita connection of $g$.
Since $\eta$ is invariant under the flow of $\xi$ i.e. $\mathcal{L}_\xi \eta=0$, it follows that for all vector field $X$:
\begin{align*}
 	0&=(\mathcal{L}_\xi \eta ) (X)=\xi (\eta(X))-\eta (\mathcal{L}_\xi X) \\
	  &= \xi g(X,\xi)-g(\xi, \nabla_\xi X - \nabla_X \xi) \\
	  &= g(\nabla_\xi X, \xi)+g(X,\nabla_\xi \xi)-g(\xi, \nabla_\xi X - \nabla_X \xi)\\
	  &= g(X,\nabla_\xi \xi)+g(\xi, \nabla_X \xi)
  \end{align*}
But since $g(\xi, \xi) = 1$, we differentiate both sides to obtain: 
\begin{equation*}
0=Xg(\xi,\xi)=2g(\xi, \nabla_X \xi)
\end{equation*}
Hence, from the earlier computation, $g(X, \nabla_\xi \xi) = 0$ for all $X$, and thus $\nabla_\xi \xi = 0$. Therefore, the integral curves of $\xi$ are geodesics.
\end{proof}

\noindent
Almost contact-symplectic and contact structures are special cases of the broader class of almost contact structures.
The following result provides an explicit expression for the volume element of a Riemannian metric associated with a contact-symplectic pair, generalizing a classical result from contact and symplectic geometry (see \cite{Blairbook}).

\begin{theorem}\label{theoremdV}
For a contact-symplectic pair $(\eta, \omega)$ of type $(m,n)$ on a manifold, all associated metrics have the same volume element given by
\begin{equation*}
dV= \dfrac{(-1)^{m+n}}{2^{m+n} m! \ n!}\ \eta\wedge (d\eta)^{m}\wedge\omega^{n}
\end{equation*}
\end{theorem}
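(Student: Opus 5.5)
The plan is to compute $\eta\wedge(d\eta)^m\wedge\omega^n$ on a local orthonormal frame adapted to an associated metric $g$. Then I will check that the result is a constant multiple of the Riemannian volume form, with a constant independent of $g$. Since the right-hand side of the formula depends only on $(\eta,\omega)$, this shows at once that all associated metrics have the same volume element.

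\textbf{Step 1: $\phi$ preserves the splitting.} Fix an associated metric $g$ with tensor $\phi$, and set $\Phi=d\eta+\omega$, so that $\Phi(X,Y)=g(X,\phi Y)$. For $X\in T\mathcal{S}$ and any $Y$, we have $g(Y,\phi X)=\Phi(Y,X)=\omega(Y,X)$, because $i_Xd\eta=0$. For $X\in\mathcal{D}$ we have $i_X\omega=0$, so $g(Y,\phi X)=d\eta(Y,X)$.

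The first identity says that $g(\cdot,\phi X)$ vanishes on $T\mathcal{C}$ for $X\in T\mathcal{S}$. The second says it vanishes on $T\mathcal{K}=\mathbb{R}\xi\oplus T\mathcal{S}$ for $X\in\mathcal{D}$. So $\phi(T\mathcal{S})$ is $g$-orthogonal to $T\mathcal{C}$, and $\phi(\mathcal{D})$ is $g$-orthogonal to $T\mathcal{K}$.

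This is where I expect the main obstacle. The bundles $T\mathcal{S}$ and $T\mathcal{C}$ need not be $g$-orthogonal, so I do not get $\phi(T\mathcal{S})\subset T\mathcal{S}$ directly. To get around this, I will avoid needing invariance and work instead with the $g$-orthogonal complements:
\begin{equation*}
E=(T\mathcal{C})^{\perp},\qquad F=(T\mathcal{K})^{\perp}.
\end{equation*}
Both lie in $\mathcal{H}$, since they are orthogonal to $\xi$. Their dimensions are $2n$ and $2m$, and $\mathcal{H}=E\oplus F$ holds because $\Phi$ is nondegenerate on $\mathcal{H}$.

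By the previous paragraph, $\phi(T\mathcal{S})=E$ and $\phi(\mathcal{D})=F$, using that $\phi$ is injective on $\mathcal{H}$ and comparing dimensions. Moreover $d\eta$ restricted to $E$ and $\omega$ restricted to $F$ vanish. For example, if $X=\phi X'$ and $Y=\phi Y'$ with $X',Y'\in T\mathcal{S}$, then $d\eta(\phi X',\phi Y')=\Phi(\phi X',\phi Y')-\omega(\phi X',\phi Y')$. By compatibility, $\Phi(\phi X',\phi Y')=\Phi(X',Y')=\omega(X',Y')$, so it remains to check $\omega(\phi X',\phi Y')=\omega(X',Y')$. I would verify this using $\omega(\phi X',\cdot)=-g(\cdot,\phi^2X')$ restricted appropriately, or else simply work with the mixed decomposition below.

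\textbf{Step 2: block form in a frame.} A cleaner route is to choose a frame adapted to the splitting $T\mathcal{S}\oplus\mathcal{D}\oplus\mathbb{R}\xi$ rather than an orthonormal one. In this frame $\eta\wedge(d\eta)^m\wedge\omega^n$ factors as
\begin{equation*}
\eta\wedge\bigl((d\eta)|_{\mathcal{D}}\bigr)^m\wedge\bigl(\omega|_{T\mathcal{S}}\bigr)^n.
\end{equation*}
Its value on the frame $(\xi,\text{basis of }\mathcal{D},\text{basis of }T\mathcal{S})$ is the product of two Pfaffians, one for $d\eta$ on $\mathcal{D}$ and one for $\omega$ on $T\mathcal{S}$.

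Meanwhile the $g$-volume of this frame is $\sqrt{\det G}$, where $G$ is the Gram matrix. Using $\Phi=g(\cdot,\phi\cdot)$, the matrix of $\Phi$ on $\mathcal{H}$ is $G_{\mathcal{H}}J$, where $J$ is the matrix of $\phi|_{\mathcal{H}}$ and $J^2=-I$. Hence
\begin{equation*}
\det\Phi_{\mathcal{H}}=\det G_{\mathcal{H}}.
\end{equation*}
In the frame $\mathcal{D}\oplus T\mathcal{S}$, the matrix of $\Phi_{\mathcal{H}}$ is block diagonal with blocks $d\eta|_{\mathcal{D}}$ and $\omega|_{T\mathcal{S}}$. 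This holds because $d\eta(\mathcal{D},T\mathcal{S})=0$ and $\omega(\mathcal{D},\cdot)=0$. So $\det\Phi_{\mathcal{H}}=\mathrm{Pf}(d\eta|_{\mathcal{D}})^2\,\mathrm{Pf}(\omega|_{T\mathcal{S}})^2$.

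This block-diagonal observation resolves the obstacle from Step 1 without needing orthogonality. Since $\xi\perp\mathcal{H}$ and $|\xi|=1$, we get $\det G=\det G_{\mathcal{H}}$. It follows that $|\eta\wedge(d\eta)^m\wedge\omega^n|$ on the frame equals $c\sqrt{\det G}$, that is, $c$ times the $g$-volume.

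\textbf{Step 3: the constant.} The constant $c$ comes from $(\sum_i e^{2i-1}\wedge e^{2i})^k=k!\,e^1\wedge\dots\wedge e^{2k}$, which gives $m!\,n!$. The factors of $2$ and the signs come from the paper's conventions for wedge products and the $2$-form/metric identification; computing $\Phi$ on a unit vector and its image under $\phi$ gives $g(\phi X,\phi X)=\Phi(\phi X,X)$, which fixes the normalization of $\Phi$ relative to an orthonormal frame. I would calibrate the sign $(-1)^{m+n}$ and the $2^{m+n}$ on the product example, with contact and symplectic pieces in Darboux form and their associated metrics, keeping track of the orientation. Since $c$ is independent of $g$, the formula follows.
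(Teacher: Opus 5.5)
Your Step 2 is a correct proof of the identity up to sign, and it takes a genuinely different route from the paper. The paper works in a local $\phi$-basis $\{\xi,X_i,\phi X_i\}$ and reads off $d\eta+\omega=-2\sum_i\theta^i\wedge\theta^{i^*}$ from $g(X,\phi Y)=(d\eta+\omega)(X,Y)$. It then uses only $(d\eta)^{m+1}=0=\omega^{n+1}$ to collapse the binomial expansion of $(d\eta+\omega)^{m+n}$ to $\frac{(m+n)!}{m!\,n!}(d\eta)^m\wedge\omega^n$. The splitting $T\mathcal{S}\oplus\mathcal{D}$ never appears. Both the constant $2^{m+n}$ and the sign $(-1)^{m+n}$ come out in one line; the sign is relative to the orientation of $\phi$-bases, which is how $dV$ is oriented there.

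You instead use a non-orthonormal frame adapted to $\mathbb{R}\xi\oplus\mathcal{D}\oplus T\mathcal{S}$. There $d\eta+\omega$ is block diagonal on $\mathcal{H}$, so the top form is $m!\,n!$ times a product of Pfaffians, and you tie this to $g$ through $\det\Phi_{\mathcal H}=\det G_{\mathcal H}\det J=\det G_{\mathcal H}$. This makes transparent why no orthogonality of $T\mathcal{S}$ and $T\mathcal{C}$ is needed. What it yields, however, is an equality of densities, $|\eta\wedge(d\eta)^m\wedge\omega^n|=c\,|dV_g|$ with $c$ universal. That already proves that all associated metrics have the same volume element with respect to any fixed orientation of $M$.

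Two corrections are needed.

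First, the ``moreover'' claim in Step 1 is false and should be deleted. Let $c$ be the $\mathcal{D}$-component of $\phi|_{T\mathcal{S}}$. Then $d\eta(\phi X',\phi Y')=d\eta(cX',cY')$, and nothing forces $c(T\mathcal{S})$ to be $d\eta$-isotropic. For a counterexample with $m=n=1$, take $\mathcal{H}_p$ with $d\eta+\omega=dx_1\wedge dy_1+dx_2\wedge dy_2$ and $T\mathcal{S}=\langle\partial_{x_1},\partial_{y_1}\rangle$. Conjugate the standard compatible complex structure by the symplectic shear $(x_1,y_1,x_2,y_2)\mapsto(x_1,y_1+x_2,x_2,y_2+x_1)$. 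Then $\phi\partial_{x_1}$ and $\phi\partial_{y_1}$ have $\mathcal{D}$-components $\pm\partial_{x_2}$ and $\mp\partial_{y_2}$, so $d\eta(\phi\partial_{x_1},\phi\partial_{y_1})\neq 0$. Your Step 2 does not use this claim, but the verification you sketch would fail.

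Second, calibrating on a product example determines $c$, but not the sign for an arbitrary associated metric. Your adapted frame carries no information about the orientation of $\phi$-bases, so one more argument is needed. One option: the relative sign between $(d\eta+\omega)^{m+n}$ and the $\phi$-orientation of $\mathcal{H}_p$ is locally constant on the connected space of compatible $\phi|_{\mathcal{H}_p}$, and it is invariant under linear symplectomorphisms. The simpler option is to evaluate on one $\phi$-basis using $(d\eta+\omega)(X_i,\phi X_i)=g(X_i,\phi^2X_i)=-1$, which is exactly the paper's computation.
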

\begin{proof}
We know that for a metric contact-symplectic structure $(\eta, \omega, g, \phi)$ of type $(m,n)$ with Reeb vector field $\xi$ on a manifold $M$, $(\phi, \xi, \eta, g)$ is an almost contact metric structure on $M$.
Consequently, one can locally construct a $\phi$-basis, that is, an orthonormal frame $\{ \xi, X_1, \dots , X_{m+n},  X_{1^*},$ $\dots , X_{(m+n)^*}\}$ where $X_{i^*}=\phi X_i$.
Let $\{ \eta, \theta^1, \dots , \theta^{m+n}, \theta^{1^*}, \dots , \theta^{(m+n)^*}\}$ be its dual basis. 
Then
\begin{equation*}
dV= \eta \wedge \theta^1 \wedge  \theta^{1^*} \wedge  \dots \wedge \theta^{m+n} \wedge \theta^{(m+n)^*}.
\end{equation*}
Observe that
\begin{equation*}
d\eta+\omega =\sum_{i=1}^{m+n} (\theta^{i^*} \wedge \theta^i-\theta^i \wedge  \theta^{i^*})=-2\sum_{i=1}^{m+n}\theta^i \wedge  \theta^{i^*}
\end{equation*}
which follows from the identity $g(X, \phi Y) = (\mathrm{d}\eta + \omega)(X, Y)$ and the orthonormality of the local $\phi$-basis. 
Next we have
\begin{equation*}
(d\eta+\omega )^{m+n}=(-2)^{m+n} (m+n)! \ \theta^1 \wedge  \theta^{1^*} \wedge  \dots \wedge \theta^{m+n} \wedge \theta^{(m+n)^*}.
\end{equation*}
On the other hand,
\begin{align*}
(d\eta+\omega )^{m+n}&=\sum_{k=0}^{m+n} \binom{m+n}{k} (d\eta)^k \wedge \omega^{m+n-k} \\
				   &= \dfrac{(m+n)!}{m!\ n!}(d\eta)^m \wedge \omega^n .
\end{align*}
Comparing both expressions yields:
\begin{align*}
\eta\wedge(d\eta)^m \wedge \omega^n&=(-2)^{m+n} m!\ n! \ \eta \wedge \theta^1 \wedge  \theta^{1^*} \wedge  \dots \wedge \theta^{m+n} \wedge \theta^{(m+n)^*}\\
								&=(-2)^{m+n} m!\ n!  \ dV .
\end{align*}
from which the formula follows.
\end{proof}

\noindent
We now turn to the study of the characteristic foliations arising from a metric contact-symplectic structure 
$(\eta, \omega, g)$. 
Let $\mathcal{S}$, $\mathcal{C}$, and $\mathcal{K}$ denote respectively the characteristic foliations of $\eta$, $\omega$, and $d\eta$. Their leaves are, respectively symplectic, contact and cosymplectic manifolds.

\begin{theorem}
Let $(\eta, \omega, g)$ be a metric contact-symplectic structure on a manifold $M$, and suppose that the characteristic foliations $\mathcal{S}$ of $\eta$ and $\mathcal{C}$ of $\omega$ are orthogonal. Then:
\begin{itemize}[label=--]
\item the leaves of $\mathcal{S}$,
\item the leaves of $\mathcal{C}$,
\item and the leaves of $\mathcal{K}$, the characteristic foliation of $d\eta$,
\end{itemize}
are all minimal submanifolds of the Riemannian manifold $(M, g)$.
\end{theorem}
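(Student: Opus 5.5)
We will use Rummler's criterion for minimality of foliations. Let $\mathcal{F}$ be a foliation of dimension $p$ on $(M,g)$, and let $\chi_{\mathcal{F}}$ be its characteristic form. This is the $p$-form that restricts to the Riemannian volume form on each (locally oriented) leaf and vanishes as soon as one argument lies in the orthogonal complement $T\mathcal{F}^\perp$. Rummler's criterion says that the leaves of $\mathcal{F}$ are minimal if and only if
\begin{equation*}
d\chi_{\mathcal{F}}(X_1,\dots ,X_p,Y)=0 \quad \text{for all } X_i\in T\mathcal{F},\ Y\in T\mathcal{F}^\perp .
\end{equation*}
This holds in particular when $\chi_{\mathcal{F}}$ is closed. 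The plan is therefore to identify, for each of $\mathcal{S}$, $\mathcal{C}$ and $\mathcal{K}$, its characteristic form as a constant multiple of a form built from $\eta$ and $\omega$, and then to check Rummler's condition.

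\textbf{Step 1: an adapted frame.} We first show that $\phi$ preserves $T\mathcal{S}$ and $\mathcal{D}$. Take $Y\in T\mathcal{S}$ and $X\in T\mathcal{C}$. Then $g(X,\phi Y)=(d\eta+\omega)(X,Y)=0$, because $Y$ lies in the characteristic space of $d\eta$ and $X$ in that of $\omega$. Hence $\phi Y\perp T\mathcal{C}$, and by the orthogonality hypothesis $\phi Y\in T\mathcal{S}$. Since $\phi$ is invertible on $\mathcal{H}=T\mathcal{S}\oplus\mathcal{D}$ and both summands are $g$-orthogonal, it also preserves $\mathcal{D}$. So we can build a local $\phi$-basis $\{\xi,X_1,\dots,X_m,X_{1^*},\dots,X_{m^*}\}\cup\{Y_1,\dots,Y_n,Y_{1^*},\dots,Y_{n^*}\}$ with the $X$'s spanning $\mathcal{D}$ and the $Y$'s spanning $T\mathcal{S}$. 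Let $\theta^i,\theta^{i^*}$ and $\sigma^a,\sigma^{a^*}$ be the corresponding dual forms. Since $d\eta$ vanishes on $T\mathcal{K}$ and $\omega$ on $T\mathcal{C}$, the computation in the proof of Theorem~\ref{theoremdV} splits into
\begin{equation*}
d\eta=-2\sum_i\theta^i\wedge\theta^{i^*},\qquad \omega=-2\sum_a\sigma^a\wedge\sigma^{a^*}.
\end{equation*}
Consequently the characteristic forms are
\begin{equation*}
\chi_{\mathcal{S}}=c_n\,\omega^n,\qquad \chi_{\mathcal{C}}=c_m\,\eta\wedge(d\eta)^m,\qquad \chi_{\mathcal{K}}=c_n\,\eta\wedge\omega^n,
\end{equation*}
with $c_k=(-1)^k/(2^k k!)$. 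One checks that these forms vanish when an argument lies in the respective normal bundle: for $\mathcal{S}$ the normal bundle is $T\mathcal{C}$, for $\mathcal{C}$ it is $T\mathcal{S}$, and for $\mathcal{K}$ it is $\mathcal{D}$. Each such case is killed by $\eta$, by $d\eta$ or by $\omega$.

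\textbf{Step 2: Rummler's condition.} For $\mathcal{S}$ and $\mathcal{C}$ the characteristic forms are closed, since $d(\omega^n)=0$ and $d(\eta\wedge(d\eta)^m)=(d\eta)^{m+1}=0$. So those leaves are minimal. For $\mathcal{K}$ the form is not closed: $d(\eta\wedge\omega^n)=d\eta\wedge\omega^n$. We must therefore evaluate $d\eta\wedge\omega^n$ on $2n+1$ vectors tangent to $\mathcal{K}$ together with one vector $Y\in\mathcal{D}$. In the shuffle expansion, each term feeds $d\eta$ two of the arguments. If $Y$ is among them, the other argument is tangent to $\mathcal{K}$, which is the characteristic bundle of $d\eta$, so the term vanishes. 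If $Y$ is not among them, both arguments of $d\eta$ are tangent to $\mathcal{K}$, and the term vanishes again. Hence Rummler's condition holds and the leaves of $\mathcal{K}$ are minimal.

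\textbf{Main obstacle.} The delicate point is Step 1: the orthogonality hypothesis is exactly what forces $\phi$ to preserve the splitting. That in turn makes $\omega$ (respectively $d\eta$) equal to the Kähler-type form of the induced metric on $T\mathcal{S}$ (respectively $\mathcal{D}$), so that the powers above really are the leafwise volume forms. The handling of $\mathcal{K}$ additionally needs the pointwise vanishing argument of Step 2 rather than closedness.
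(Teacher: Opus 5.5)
Your proposal is correct and follows the paper's proof. Both use Rummler's criterion with the same characteristic forms $\frac{(-1)^n}{2^n n!}\omega^n$, $\frac{(-1)^m}{2^m m!}\eta\wedge(d\eta)^m$ and $\frac{(-1)^n}{2^n n!}\eta\wedge\omega^n$, closedness for $\mathcal{S}$ and $\mathcal{C}$, and relative closedness for $\mathcal{K}$; for $\mathcal{K}$ the paper contracts with $\xi$, while you use that $T\mathcal{K}$ is the full kernel of $d\eta$, which is the same observation.

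Your Step 1 usefully supplies the justification the paper leaves implicit: orthogonality forces $\phi$ to preserve $T\mathcal{S}$ and $\mathcal{D}$, so the powers really are the leafwise volume forms. One small correction there: invertibility of $\phi$ alone does not give $\phi(\mathcal{D})\subseteq\mathcal{D}$. It does follow from $\phi$ being $g$-skew (the metric is compatible), or directly from $g(Y,\phi X)=(d\eta+\omega)(Y,X)=0$ for $Y\in T\mathcal{S}$ and $X\in\mathcal{D}$.
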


\begin{proof}
We apply Rummler's minimality criterion (see \cite{Rummler}), which states that a $p$-dimensional foliation $\mathcal{F}$ on a Riemannian manifold is minimal if and only if its characteristic form $\alpha$ is relatively closed on $T\mathcal{F}$; that is,
\begin{equation*}
d\alpha(X_1, \dots, X_p, Y) = 0
\end{equation*}
for all $X_1, \dots, X_p$  tangent to $\mathcal F$ and $Y$ tangent to the manifold.
We recall that $\alpha$ is the $p$-form
which vanishes on vectors orthogonal to $\mathcal F$ and whose restriction to $\mathcal F$ is the volume form of the induced metric on the leaves.

\medskip 
\noindent
Let $(m,n)$ be the type numbers of the contact-symplectic pair $(\eta, \omega)$, and let $\xi$ denote its Reeb vector field.

\medskip 
\noindent
-- The foliation $\mathcal{S}$ has dimension $2n$. Since $T\mathcal{C}$ is the orthogonal subbundle of $T\mathcal{S}$, the characteristic form of $\mathcal{S}$ is :
\begin{equation*}
\alpha_{\mathcal{S}} = \dfrac{(-1)^n}{2^n \, n!} \, \omega^n,
\end{equation*}
which is closed, and thus relatively closed on $T\mathcal{S}$. Therefore, the leaves of $\mathcal{S}$ are minimal.

\medskip 
\noindent
-- The foliation $\mathcal{C}$ has dimension $2m+1$, and its characteristic form is:
\begin{equation*}
\alpha_{\mathcal{C}} = \dfrac{(-1)^m}{2^m \, m!} \, \eta \wedge (d\eta)^m,
\end{equation*}
which is again closed. Hence, the leaves of $\mathcal{C}$ are minimal.

\medskip 
\noindent
-- The foliation $\mathcal{K}$ has dimension $2n+1$, and its characteristic form is:
\begin{equation*}
\alpha_{\mathcal{K}} =  \dfrac{(-1)^n}{2^n \, n!} \, \eta \wedge \omega^n.
\end{equation*}
As $\xi$ is tangent to $\mathcal{K}$ and since
\begin{equation*}
i_\xi d(\eta \wedge \omega^n) = i_\xi (d\eta \wedge \omega^n) = 0,
\end{equation*}
the form $\alpha_{\mathcal{K}}$ is relatively closed on $T\mathcal{K}$. Thus, the leaves of $\mathcal{K}$ are also minimal.
\end{proof}


\begin{thebibliography}{1}


\bibitem{B}
Bande, Gianluca. Couples contacto-symplectiques.  \textit{Transactions of the American Mathematical Society}, 355 (2003), 1699--1711 

\bibitem{BGK}
Bande, Gianluca; Ghiggini, Paolo; Kotschick, Dieter. Stability theorems for symplectic and contact pairs. \textit{International Mathematics Research Notices}, 2004:68 (2004), 3673--3688


\bibitem{BHnormal}
Bande, Gianluca; Hadjar, Amine. On normal contact pairs. \textit{International Journal of Mathematics}, 21 (2010), 737--754 

\bibitem{BKsp}
Bande, Gianluca, Kotschick, Dieter. The geometry of symplectic pairs. \textit{Transactions of the American Mathematical Society}, 358 (2006),1643--1655


\bibitem{Blairbook}
Blair, David: \textit{Riemannian geometry of contact and symplectic manifolds.} Progress in Mathematics, 203 (Second edition). Birkh\"auser Boston (2010)

\bibitem {Rummler}
Rummler, Hansklaus. Quelques notions simples en g\'eom\'etrie riemannienne et leurs applications aux feuilletages compacts. \textit{Commentarii Mathematici Helvetici}, 54 (1979), 224--239

\end{thebibliography}
\end{document}